\theoremstyle{plain}
\newtheorem{theorem}{Theorem}
\newtheorem{corollary}[theorem]{Corollary}
\DeclareMathOperator{\e}{e}
\newcommand{\C}{\mathbb{C}}
\newcommand{\N}{\mathbb{N}}
 \DeclareMathOperator{\re}{Re}
 \DeclareMathOperator{\im}{Im}
 \DeclareMathOperator{\sign}{sign}
 \renewcommand{\leq}{\leqslant}
\renewcommand{\geq}{\geqslant}
\begin{document}

\title[On the numerical index of real $L_p(\mu)$-spaces]
{On the numerical index of real $\boldsymbol{L_p(\mu)}$-spaces}

\author[Mart\'{\i}n]{Miguel Mart\'{\i}n}
\author[Mer\'{\i}]{Javier Mer\'{\i}}

\address[Mart\'{\i}n \& Mer\'{\i}]{Departamento de An\'{a}lisis Matem\'{a}tico\\
Facultad de Ciencias\\
Universidad de Granada\\
E-18071 - Granada (SPAIN)}

\email{\texttt{mmartins@ugr.es}, \texttt{jmeri@ugr.es}}

\author[Popov]{Mikhail Popov}

\address[Popov]{Department of Mathematics\\
Chernivtsi National University\\
str.~Kotsyubyns'koho 2, Chernivtsi, 58012 (Ukraine)}

\email{\texttt{misham.popov@gmail.com}}

 \subjclass[2000]{46B04, 46E30, 47A12}
 \keywords{numerical radius, numerical index, $L_p$-spaces.}

\date{March 10th, 2009. Revised September 28th, 2009. Correction November 26th, 2009.}

\thanks{First and second authors partially supported by Spanish
MEC and FEDER project no.\ MTM2006-04837 and Junta de Andaluc\'{\i}a
and FEDER grants FQM-185 and P06-FQM-01438. Third author supported
by Junta de Andaluc\'{\i}a and FEDER grant P06-FQM-01438 and by Ukr.\
Derzh.\ Tema N 0103Y001103.}

\begin{abstract}
We give a lower bound for the numerical index of the real space
$L_p(\mu)$ showing, in particular, that it is non-zero for
$p\neq 2$. In other words, it is shown that for every bounded
linear operator $T$ on the real space $L_p(\mu)$, one has
$$
\sup\left\{\Bigl|\int |x|^{p-1}\sign(x)\,T x\ d\mu \Bigr|\ :
\ x\in L_p(\mu),\,\|x\|=1\right\} \geq \frac{M_p}{12\e}\|T\|
$$
where $\displaystyle
M_p=\max_{t\in[0,1]}\frac{|t^{p-1}-t|}{1+t^p}>0$ for every
$p\neq 2$. It is also shown that for every bounded linear
operator $T$ on the real space $L_p(\mu)$, one has
$$
\sup\left\{\int |x|^{p-1}|Tx|\ d\mu \ : \ x\in L_p(\mu),\,\|x\|=1\right\}
\geq \frac{1}{2\e}\|T\|.
$$
\end{abstract}

\maketitle

\section{Introduction}
The numerical index of a Banach space is a constant introduced
by G.~Lumer in 1968 (see \cite{D-Mc-P-W}) which relates the
norm and the numerical radius of (bounded linear) operators on
the space. Let us start by recalling the relevant definitions.
Given a Banach space $X$, we will write $X^*$ for its
topological dual and $\mathcal{L}(X)$ for the Banach algebra of
all (bounded linear) operators on $X$. For an operator $T\in
\mathcal{L}(X)$, its \emph{numerical radius} is defined as
$$
v(T):=\sup\{|x^*(Tx)| \ : \ x^*\in X^*,\ x\in X,\ \|x^*\|=\|x\|=x^*(x)=1 \},
$$
and it is clear that $v$ is a seminorm on $\mathcal{L}(X)$
smaller than the operator norm. The \emph{numerical index} of
$X$ is the constant given by
$$
n(X):=\inf\{v(T) \ : \ T\in \mathcal{L}(X),\ \|T\|=1\}
$$
or, equivalently, $n(X)$ is the greatest constant $k\geq 0$ such
that $k\,\|T\| \leq v(T)$ for every $T\in \mathcal{L}(X)$.
Classical references here are the aforementioned paper
\cite{D-Mc-P-W} and the monographs by F.~Bonsall and J.~Duncan
\cite{B-D1,B-D2} from the seventies. The reader will find the
state-of-the-art on the subject in the recent survey paper
\cite{KaMaPa} and references therein. We refer to all these
references for background.

Let us comment on some results regarding the numerical index which
will be relevant in the sequel. First, it is clear that $0\leq
n(X)\leq 1$ for every Banach space $X$, and $n(X)>0$ means that the
numerical radius and the operator norm are equivalent on
$\mathcal{L}(X)$. In the real case, all values in $[0,1]$ are
possible for the numerical index. In the complex case one has
$1/\e\leq n(X)\leq 1$ and all of these values are possible. Let us
also mention that $n(X^*)\leq n(X)$, and that the equality does not
always hold. Anyhow, when $X$ is a reflexive space, one clearly gets
$n(X)=n(X^*)$. Second, there are some classical Banach spaces for
which the numerical index has been calculated. For instance, the
numerical index of $L_1(\mu)$ is $1$, and this property is shared by
any of its isometric preduals. In particular, $n\bigl(C(K)\bigr)=1$
for every compact $K$ and $n(Y)=1$ for every finite-codimensional
subspace $Y$ of $C[0,1]$. If $H$ is a Hilbert space of dimension
greater than one then $n(H)=0$ in the real case and $n(H)=1/2$ in
the complex case.

Let $(\Omega, \Sigma, \mu)$ be a measure space and $1< p<\infty$. We
write $L_p(\mu)$ for the real or complex Banach space of measurable
scalar functions $x$ defined on $\Omega$ such that
$$
\|x\|_p:=\left(\int_\Omega|x|^p\,d\mu \right)^{\frac{1}{p}}<\infty.
$$
We use the notation $\ell_p^m$ for the $m$-dimensional
$L_p$-space. For $A \in \Sigma$, $\chi_{A}$ denotes the
characteristic function of the set $A$. We write $q=p/(p-1)$
for the conjugate exponent to $p$ and
$$
M_p:=\max_{t\in[0,1]} \frac{|t^{p-1}-t|}{1+t^p}=\max_{t\geq 1} \frac{|t^{p-1}-t|}{1+t^p}\, ,
$$
(which is the numerical radius of the operator $T(x,y)=(-y,x)$
defined on the real space $\ell_p^2$, see
\cite[Lemma~2]{MarMer-LP} for instance).

The problem of computing the numerical index of the
$L_p$-spaces was posed for the first time in the seminal paper
\cite[p.~488]{D-Mc-P-W}. There it is proved that
$\bigl\{n(\ell_p^{2})\ : \ 1< p < \infty\bigr\}=[0,1[$ in the
real case, even though the exact computation of $n(\ell_p^2)$
is not achieved for $p\neq 2$ (even now!). Recently, some
results have been obtained on the numerical index of the
$L_p$-spaces \cite{Eddari,Eddari2,Eddari3,MarMer-LP,M-P}.
\begin{itemize}
\item[(a)] The sequence $\bigl(n(\ell_p^m)\bigr)_{m\in\N}$
    is decreasing.
\item[(b)] $n\bigl(L_p(\mu)\bigr)=\inf \{n(\ell_p^m)\,:\,
    m\in\N\}$ for every measure $\mu$ such that
    $\dim\bigl(L_p(\mu)\bigr)=\infty$.
\item[(c)] In the real case, $\displaystyle
    \max\left\{\frac{1}{2^{1/p}},\
    \frac{1}{2^{1/q}}\right\}\,M_p\leq n(\ell_p^{2})\leq
    M_p$.
\item[(d)] In the real case, $n(\ell_p^m)>0$ for $p\neq 2$
    and $m\in \N$.
\end{itemize}
The aim of this paper is to give a lower estimation for the
numerical index of the \emph{real} $L_p$-spaces. Concretely, it
is proved that
\begin{equation}\label{eq:final}
n\bigl(L_p(\mu)\bigr)\geq \frac{M_p}{12\e}\,.
\end{equation}
As $M_p>0$ for $p\neq 2$, this extends item (d) for
infinite-dimensional real $L_p$-spaces, meaning that the
numerical radius and the operator norm are equivalent on
$\mathcal{L}\bigl(L_p(\mu)\bigr)$ for every $p\neq 2$ and every
positive measure $\mu$. This answers in the positive a question
raised by C.~Finet and D.~Li (see \cite{Eddari2,Eddari3}) also
posed in \cite[Problem~1]{KaMaPa}.

The key idea to get this result is to define a new seminorm on
$\mathcal{L}\bigl(L_p(\mu)\bigr)$ which is in between the
numerical radius and the operator norm, and to get constants of
equivalence between these three seminorms. Let us give the
corresponding definitions.

For any $x \in L_p(\mu)$, we denote
$$
x^\# = \begin{cases} |x|^{p-1} \sign(x) & \text{ in the real case}, \\
|x|^{p-1} \sign(\overline{x}) & \text{ in the complex case}, \end{cases}
$$
which is the unique element in $L_q(\mu)$ such that
$$
\|x\|_p^p=\|x^\#\|_q^q \qquad \text{and} \qquad \int_\Omega x\,x^\#\ d\mu =
\|x\|_p\,\|x^\#\|_q=\|x\|_p^p.
$$
With this notation, for $T\in \mathcal{L}\bigl(L_p(\mu)\bigr)$ one
has
\begin{align*}
v(T)&=\sup\left\{\Bigl|\int_\Omega x^\# T x\,d\mu\Bigr|\ : \ x\in L_p(\mu),\
 \|x\|_p=1\right\}.
\end{align*}
Here is our new definition. Given an operator $T \in \mathcal{L}
\bigl(L_p(\mu)\bigr)$, the \emph{absolute numerical radius} of $T$
is given by
\begin{align*}
|v|(T) &:= \sup \, \left\{ \int_\Omega |x^\# T x | \, d \mu\ :\
x \in L_p(\mu),\, \|x\|_p=1 \right\} \\ &= \sup \,
\left\{ \int_\Omega |x|^{p-1} |Tx| \, d \mu\ :\ x \in L_p(\mu),\, \|x\|_p=1 \right\}
\end{align*}
Obviously,
\begin{equation*}
v(T) \leq |v|(T)\leq \|T\| \qquad \bigl(T \in
\mathcal{L}\bigl(L_p(\mu)\bigr)\,\bigl).
\end{equation*}
Given an operator $T$ on the \emph{real} space $L_p(\mu)$, we will
show that
$$
v(T) \geq \, \frac{M_p}{6} \, |v|(T)
\qquad \text{and} \qquad
|v|(T) \geq \, \frac{n\bigl(L_p^\mathbb C(\mu)\bigr)}{2} \,\|T\|\, ,
$$
where $n\bigl(L_p^\mathbb C(\mu)\bigr)$ is the numerical index
of the \emph{complex} space $L_p(\mu)$. Since
$n\bigl(L_p^\mathbb C(\mu)\bigr)\geq 1/\e$ (as for any complex
space, see \cite[Theorem~4.1]{B-D1}), the above two
inequalities together give, in particular, the inequality
\eqref{eq:final}.

\section{The results}

We start proving that the numerical radius is bounded from
below by some multiple of the absolute numerical radius.

\begin{theorem}
Let $1<p<\infty$ and let $\mu$ be a positive meassure. Then, every
bounded linear operator $T$ on the real space $L_p(\mu)$ satisfies
$$
v(T) \geq \, \frac{M_p}{6} \, |v|(T),
$$
where $\displaystyle M_p=\max_{t\geq 1}
\frac{|t^{p-1}-t|}{1+t^p}$.
\end{theorem}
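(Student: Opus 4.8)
The plan is to leverage the only action of $T$ that we genuinely control, namely $Tx=y$ for a single well-chosen test function $x$, and to defeat the sign cancellation responsible for the gap between $v(T)$ and $|v|(T)$. Fix $\eps>0$ and a unit vector $x$ with $\int_\Omega |x|^{p-1}|Tx|\,d\mu\geq |v|(T)-\eps$, and set $y=Tx$. Then the integrand $x^\#y=|x|^{p-1}\sign(x)\,y$ has $\int_\Omega|x^\#y|\,d\mu\approx|v|(T)$, while all we directly obtain for the numerical radius is $v(T)\geq\bigl|\int_\Omega x^\#y\,d\mu\bigr|$. The obstruction is transparent: on the bad set $B=\{\omega:\ x^\#(\omega)\,y(\omega)<0\}$ the integrand carries the wrong sign, and for rotation-type operators these contributions can nearly annihilate the good ones, rendering the direct bound vacuous. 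Observe that for $p=2$ this cancellation is genuinely unavoidable (consistent with $n=0$ for a real Hilbert space), so whatever we gain must be proportional to $M_p$ and must degenerate as $p\to 2$.

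To repair the signs I would feed into $v(T)$ a one-parameter family of unit vectors $z_s$ interpolating between $x$ and the fully sign-corrected function $\eps\,x$, where $\eps=\sign(x)\sign(y)$ is chosen precisely so that $\eps\,x^\#y=|x^\#y|\geq0$ pointwise; concretely $z_s$ flips the sign of $x$ on $B$ with an intensity $s\in[0,1]$ and is renormalised in $L_p$. Using linearity of $T$, write $Tz_s$ as $y$ plus a correction built from $T(x\chi_B)$, and expand $\int_\Omega z_s^\#\,Tz_s\,d\mu$ into a main term plus an error term. By the choice of $\eps$ the main term is, up to the renormalisation factor, exactly $\int_\Omega|x|^{p-1}|y|\,d\mu\approx|v|(T)$, so it contributes on the scale of $|v|(T)$; the gain extracted per unit of flipped mass is governed, after reducing to the two coordinates ``off $B$'' versus ``on $B$'', by the expression $|t^{p-1}-t|/(1+t^p)$, whose maximum over $t\geq1$ is $M_p$. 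Matching the interpolation parameter $s$ to the extremal ratio $t$ realising $M_p$ is what forces the produced numerical value to be of order $M_p\,|v|(T)$.

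The main difficulty, and the step I expect to absorb most of the work, is the error term, a multiple of $\int_\Omega \eps\,x^\#\,T(x\chi_B)\,d\mu$. A crude Cauchy--Schwarz/H\"older estimate bounds it only by $\|T\|$, which is useless since $\|T\|$ may dwarf $|v|(T)$. The resolution must be to recognise this error as itself an absolute-numerical-radius quantity: after restricting to $B$ and renormalising $x\chi_B$, the integral $\int_\Omega|x|^{p-1}|T(x\chi_B)|\,d\mu$ should be controlled by a constant times $|v|(T)$ (the delicate point being the overlap between the support of $|x|^{p-1}$ and that of $x\chi_B$), so that the whole argument closes into a self-referential inequality of the form $v(T)\geq \alpha\,M_p\,|v|(T)-\beta\,v(T)$. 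Solving for $v(T)$ yields $v(T)\geq \tfrac{\alpha M_p}{1+\beta}\,|v|(T)$, and a careful bookkeeping of the positive/negative splitting together with the two-coordinate reduction is what should pin the constants down to $M_p/6$. Letting $\eps\to0$ then finishes the proof.
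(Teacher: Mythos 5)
Your overall skeleton does match the paper's: an extremal $x$ for $|v|(T)$, the bad set $B$, a one-parameter family of test vectors obtained by rescaling $x$ on $B$, reduction to the one-variable quantity $|t^{p-1}-t|/(1+t^p)$, and a self-referential closing inequality. The gap sits exactly at the step you yourself flag as delicate, and your proposed fix for it fails. Write $A=\Omega\setminus B$, $\beta=\int_A x^\# Tx\,d\mu$, and call $c=\int_A x^\# T(x\chi_B)\,d\mu$ the cross term. You propose to dominate $c$ by a constant times $|v|(T)$ by viewing $\int_\Omega |x|^{p-1}|T(x\chi_B)|\,d\mu$ as an absolute-numerical-radius quantity. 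But the supremum defining $|v|(T)$ only involves integrands $|w|^{p-1}|Tw|$ in which the weight and the argument of $T$ are the \emph{same} function; taking $w=x\chi_B/\|x\chi_B\|_p$ controls only $\int_B|x|^{p-1}|T(x\chi_B)|\,d\mu$, whereas $c$ is an integral over $A$, where the weight $|x\chi_B|^{p-1}$ vanishes identically. The ``overlap'' you worry about is empty, and no renormalisation repairs this. Moreover, even granting a bound $|c|\leq C\,|v|(T)$, your closing inequality would read $v(T)\geq(\alpha M_p-C)\,|v|(T)$, vacuous as soon as $C\geq \alpha M_p$; and $C$ cannot be taken small: for the rotation $T(w_1,w_2)=(-w_2,w_1)$ on the real $\ell_2^2$ and $x=(1/\sqrt2,1/\sqrt2)$ one has $v(T)=0$, $|v|(T)=1$, and $c=\beta=1/2$, so necessarily $C\geq 1/2$, while $M_p\to0$ as $p\to2$. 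This also exposes an internal inconsistency in your last paragraph: an error bounded by $C|v|(T)$ yields $-C|v|(T)$ on the right-hand side, not the $-\beta\,v(T)$ you display; only an error bounded by $v(T)$ itself produces an inequality that survives being solved for $v(T)$.

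The missing idea --- and the paper's actual mechanism --- is that the cross term is controlled by the \emph{numerical radius}, not by $|v|$: test $v(T)$ at the endpoint of your own family where the bad part is removed entirely, namely $x\chi_A$. Since $\int_\Omega (x\chi_A)^\# T(x\chi_A)\,d\mu=\beta-c$, one gets $c=\beta-r$ with $|r|\leq v(T)$. Substituting this into the expansion of $\int_\Omega y_\lambda^\# Ty_\lambda\,d\mu$ for $y_\lambda=x+\lambda x\chi_B$ shows two things. First, your picture ``main term of order $|v|(T)$ plus a small error'' is not what happens: in the worst case (the rotation above) the cross term cancels the main gain exactly, and the true gain is only the second-order quantity $\bigl|(1+\lambda)^{p-1}-1-\lambda\bigr|\,\beta$, which vanishes at $p=2$, as it must. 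Second, the unknown now enters only through $r$, contributing an error $|\lambda|\,v(T)$ that can be absorbed into the left-hand side, giving
$$
v(T)\Bigl(|\lambda|+\bigl(1+|\lambda|\bigr)(1+\lambda)^{p-1}+\max\bigl\{1,(1+\lambda)^p\bigr\}\Bigr)\ \geq\ \bigl|(1+\lambda)^{p-1}-1-\lambda\bigr|\,\beta .
$$
Setting $t=1+\lambda\geq 1$, the left factor becomes $t-1+2t^p\leq 3(1+t^p)$, and since $\beta\geq(|v|(T)-\varepsilon)/2$, optimizing over $t\geq1$ yields $v(T)\geq\frac{M_p}{6}\bigl(|v|(T)-\varepsilon\bigr)$. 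Without this substitution step (or an equivalent two-point evaluation eliminating $c$), your argument does not close.
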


\begin{proof}
Since $|v|$ is a seminorm, we may and do assume that $\|T\| = 1$.
Suppose that $|v|(T) > 0$ (otherwise there is nothing to prove),
fix any $0<\varepsilon < |v|(T)$ and choose $x \in L_p(\mu)$ with
$\|x\|=1$ such that
$$
\int_\Omega |x^\# Tx| \, d \mu \geq |v|(T) - \varepsilon
\stackrel{def}{=} 2 \beta_0 > 0.
$$
Now, set $A = \{ t \in \Omega\,: \ x^\#(t) [Tx](t) \geq 0 \}$
and $B = \Omega \setminus A$. Then
\begin{equation*}
\int_A x^\# Tx \, d \mu - \int_B x^\# Tx \, d \mu =
\int_\Omega|x^\# Tx | \, d \mu \geq 2 \beta_0
\end{equation*}
and so at least one of the summands above is greater than or
equal to $\beta_0$. Without loss of generality, we assume that
$$
\beta \stackrel{def}{=} \int_A x^\# Tx \, d \mu \geq
\beta_0
$$
(otherwise we consider $-T$ instead of $T$). Remark that
\begin{equation} \label{assump1}
\Bigl| \int_\Omega x^\# Tx \, d \mu \Bigr| \leq v(T)
\qquad \text{and} \qquad \Bigl| \int_B x^\# T (x
\chi_{B}) \, d \mu \Bigr| \leq \left\|(x\chi_B)^\#\right\|_q \|x\chi_B\|_p\, v(T)\leq v(T).
\end{equation}
Now, put $y_\lambda = x + \lambda x \chi_{B}$ for each $\lambda
\in [-1, \infty)$. Observe that
\begin{equation} \label{assump2}
\|y_\lambda^\#\|_q \|y_\lambda\|_p = \|y_\lambda\|_p^p = \int_A
|x|^p \, d \mu + (1 + \lambda)^p \int_B |x|^p \, d \mu \leq
\max \bigl\{ 1, (1+\lambda)^p \bigr\},
\end{equation}
which obviously implies that
\begin{equation}\label{eq:thm-v-w-ylambda}
\Big|\int_\Omega y_\lambda^\#Ty_\lambda\, d\mu\Big|\leq v(T)
\left\|y^\#_\lambda\right\|_q\|y_\lambda\|_p\leq
v(T)\max\bigl\{1,(1+\lambda)^p\bigr\}.
\end{equation}
On the other hand, using that
$y_\lambda^\#=x^\#\chi_{A}+(1+\lambda)^{p-1}x^\#\chi_{B}$ and
\eqref{assump1}, we deduce that
\begin{align*}
\Big|\int_\Omega y_\lambda^\#Ty_\lambda\, d\mu\Big| & =
\Big| \beta + \lambda\int_A x^\#T(x\chi_{B})\,d\mu + (1 +
\lambda)^{p-1} \int_Bx^\#Tx\, d\mu + \lambda
(1+\lambda)^{p-1} \int_B x^\#T(x\chi_{B})\,d\mu\Big| \\ & \geq
 \Big| \beta + \lambda \int_A x^\#T(x\chi_{B})\, d \mu - (1 +
\lambda)^{p-1} \beta \Big| \\ & \qquad \  - (1+\lambda)^{p-1}
\Big|\int_\Omega x^\#Tx\, d\mu \Big|-|\lambda|
(1+\lambda)^{p-1}\Big| \int_Bx^\#T(x\chi_{B})\,d\mu \Big|
\\ & \geq \Big| \bigl(1-(1+\lambda)^{p-1}\bigr) \beta
+\lambda\int_Ax^\#T(x\chi_{B})\,d\mu\Big| -
\bigl(1+|\lambda|\bigr)(1+\lambda)^{p-1} v(T).
\end{align*}
This, together with \eqref{eq:thm-v-w-ylambda}, gives us that
\begin{equation} \label{lskf}
v(T)\Big((1+|\lambda|)(1+\lambda)^{p-1}+\max\{1,(1+\lambda)^{p}\}\Big)
\geq
\\
\Big| (1-(1+\lambda)^{p-1})\beta +
\lambda\int_Ax^\#T(x\chi_{B})d\mu\Big|.
\end{equation}
Therefore, putting $\displaystyle
a=\beta^{-1}\int_Ax^\#T(x\chi_{B})\,d\mu$ and
$$
f(\lambda)=|\lambda|^{-1}
\Bigl(\bigl(1+|\lambda|\bigr)(1+\lambda)^{p-1}+\max
\bigl\{1,(1+\lambda)^{p} \bigr\}\Bigr) \qquad \bigl(\lambda
\in [-1, \infty)\setminus\{0\}\bigr),
$$
and multiplying \eqref{lskf} by $|\lambda|^{-1}\beta^{-1}$, we
obtain that
$$
\beta^{-1}v(T)f(\lambda) \geq
\left|\frac{1-(1+\lambda)^{p-1}}{\lambda}-a\right|
$$
for every $\lambda\in[-1,\infty)\setminus\{0\}$. Thus,
\begin{align*}
\beta^{-1}v(T) \bigl(1 + f(\lambda) \bigr) & = \beta^{-1}v(T) \bigl(
f(-1) + f(\lambda) \bigr) \\ & \geq
\bigl|-1-a \bigr| +
\left|\frac{1-(1+\lambda)^{p-1}}{\lambda}-a\right| \geq
\left|\frac{(1+\lambda)^{p-1}-1}{\lambda}-1\right|
\end{align*}
for every $\lambda\in[-1,\infty)\setminus\{0\}$ or, equivalently,
$$
v(T) \geq \,\beta\, \frac{\bigl| (1 + \lambda)^{p-1} - 1 - \lambda
\bigr|}{ | \lambda| +
\bigl(1+|\lambda|\bigr)(1+\lambda)^{p-1}+\max
\bigl\{1,(1+\lambda)^{p} \bigr\}}
$$
for every $\lambda\in[-1,\infty)$. Now we restrict ourselves to
$\lambda \geq 0$ and setting $t = 1+ \lambda$, we obtain that
\begin{equation*} 
v(T) \geq \,\beta\,\frac{|t^{p-1} - t|}{ t -
1+2t^p} = \,\beta\, \frac{|t^{p-1} - t|}{1+t^p} \, \frac{1+t^p}{t-1+2t^p}
\end{equation*}
for every $t \in [1, \infty)$. Since it obviously holds that
$$
\frac{1+t^p}{t-1+2t^p}\geq\frac13
$$
for each $t \in [1, \infty)$, one obtains
that
$$
v(T) \geq \,  \frac{\beta}{3} \, \sup\limits_{t\geq 1} \frac{|t^{p-1}
- t|}{1+t^p} \geq \frac{|v|(T) - \varepsilon}{6} \, \sup\limits_{t
\geq 1} \frac{|t^{p-1} - t|}{1+t^p} = \frac{|v|(T) -
\varepsilon}{6} \, M_p,
$$
which is enough in view of the arbitrariness of $\varepsilon$.
\end{proof}

Our next goal is to prove an inequality relating the absolute
numerical radius and the norm of operators on real
$L_p$-spaces.

\begin{theorem}
Let $1<p<\infty$ and let $\mu$ be a positive measure. Then, every
bounded linear operator $T$ on the real space $L_p(\mu)$ satisfies
$$
|v|(T) \geq \,\frac{n\bigl(L_p^\mathbb C(\mu)\bigr)}{2} \, \|T\|,
$$
where $n\bigl(L_p^\mathbb C(\mu)\bigr)$ is the numerical index
of the \emph{complex} space $L_p(\mu)$.
\end{theorem}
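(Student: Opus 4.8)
The plan is to pass to the complexification of $T$ and exploit the known lower bound for the numerical index of a \emph{complex} space. Write $T_{\C}$ for the operator on $L_p^{\C}(\mu)$ defined by $T_{\C}(x+iy)=Tx+iTy$ for real functions $x,y$. A real function $x$ has the same norm in $L_p^{\C}(\mu)$ as in $L_p(\mu)$, and $T_{\C}$ restricts to $T$ on such functions, so trivially $\|T_{\C}\|\geq\|T\|$. Consequently, by the very definition of the numerical index,
$$
v(T_{\C})\ \geq\ n\bigl(L_p^{\C}(\mu)\bigr)\,\|T_{\C}\|\ \geq\ n\bigl(L_p^{\C}(\mu)\bigr)\,\|T\|.
$$
Thus it suffices to establish the upper bound $v(T_{\C})\leq 2\,|v|(T)$, from which the asserted inequality follows immediately. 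Note that I do not need the (true but more delicate) fact that complexification preserves the $L_p$-norm; the trivial inequality $\|T_{\C}\|\geq\|T\|$ is enough.

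To bound $v(T_{\C})$ from above I would pass, exactly as in the real case, to the absolute version: for $z\in L_p^{\C}(\mu)$ with $\|z\|_p=1$ the norming functional is $z^{\#}$, and since $|z^{\#}|=|z|^{p-1}$,
$$
\left|\int_\Omega z^{\#}\,T_{\C}z\,d\mu\right|\ \leq\ \int_\Omega |z|^{p-1}\,|T_{\C}z|\,d\mu\ \leq\ \int_\Omega |z|^{p-1}\bigl(|Tx|+|Ty|\bigr)\,d\mu,
$$
where $z=x+iy$ and I used $|T_{\C}z|=|Tx+iTy|\leq|Tx|+|Ty|$. Everything then reduces to the key estimate
$$
\int_\Omega |z|^{p-1}\,|Tw|\,d\mu\ \leq\ |v|(T)\,\|z\|_p^{p}\qquad\text{for } w\in\{x,y\},
$$
since, with $\|z\|_p=1$, adding the instances $w=x$ and $w=y$ gives $v(T_{\C})\leq 2\,|v|(T)$. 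The point to keep in mind is that both components satisfy the pointwise domination $|w|\leq|z|$.

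The heart of the matter, and the step I expect to be the main obstacle, is therefore the following estimate: if $g\geq 0$ and $w$ is real with $|w|\leq g$, then $\int_\Omega g^{p-1}|Tw|\,d\mu\leq |v|(T)\,\|g\|_p^{p}$ (apply it with $g=|z|$). The difficulty is that $Tw$ is governed by the whole function $w$, not pointwise, so one cannot simply compare weights $g^{p-1}$ and $|w|^{p-1}$. The idea I would use is a reduction to the ``full modulus'' case by convexity. The functional $\Phi(w):=\int_\Omega g^{p-1}|Tw|\,d\mu$ is convex and norm-continuous, and $w$ lies in the closed convex hull of the sign functions $g\,\eps$, where $\eps\colon\Omega\to\{-1,1\}$ ranges over measurable signs (this order-interval description of $\{\,|w|\leq g\,\}$ is the delicate lattice point, most cleanly handled by randomizing the signs, writing $w=\int g\,\eps\,dP(\eps)$ with $P$ a product measure whose marginals recover $w/g$, and using $Tw=\int T(g\,\eps)\,dP(\eps)$ together with Jensen's inequality). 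For each such sign function $|g\,\eps|=g$, so the homogeneous form of the definition of the absolute numerical radius gives
$$
\Phi(g\,\eps)=\int_\Omega |g\,\eps|^{p-1}\,|T(g\,\eps)|\,d\mu\ \leq\ |v|(T)\,\|g\,\eps\|_p^{p}=|v|(T)\,\|g\|_p^{p}.
$$
Convexity and continuity then propagate this bound from the extreme functions $g\,\eps$ to $w$, proving the key estimate. Combining the three displays yields $v(T_{\C})\leq 2\,|v|(T)$, and hence $|v|(T)\geq\tfrac12\,n\bigl(L_p^{\C}(\mu)\bigr)\,\|T\|$, as required.
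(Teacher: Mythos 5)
Your proposal is correct, and its skeleton coincides with the paper's: complexify $T$ to $T_\C$, use $v(T_\C)\geq n\bigl(L_p^\C(\mu)\bigr)\|T_\C\|\geq n\bigl(L_p^\C(\mu)\bigr)\|T\|$, and then prove the key bound $v(T_\C)\leq 2\,|v|(T)$, the factor $2$ arising in both arguments from splitting into real and imaginary parts. Where you genuinely differ is in how that key bound is obtained. The paper discretizes: it estimates $\int_\Omega x^\# T_\C x\,d\mu$ only for simple functions $x=\sum_j a_j\e^{i\theta_j}\chi_{A_j}$, reduces a maximum over the cube $[-1,1]^m$ to its vertices $\{-1,1\}^m$ by convexity, recognizes each vertex sum as a lower bound for $\int_\Omega|y^\# Ty|\,d\mu$ with $y=\sum_j a_jz_j\chi_{A_j}$, and then must invoke density of simple functions together with a theorem of Bonsall--Duncan (the numerical radius can be computed from a dense subset of the unit sphere) to pass to $v(T_\C)$. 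You instead take an arbitrary unit vector $z=x+iy$, bound $|T_\C z|\leq|Tx|+|Ty|$ pointwise, and reduce everything to the estimate $\int_\Omega|z|^{p-1}|Tw|\,d\mu\leq|v|(T)\,\|z\|_p^p$ for real $w$ with $|w|\leq|z|$, proved by expressing the order interval $\{w:|w|\leq g\}$ as the closed convex hull of the sign functions $g\eps$ and using convexity and continuity of $w\mapsto\int_\Omega g^{p-1}|Tw|\,d\mu$. This is the same convexity principle as the paper's cube-vertex step, but run in the continuum; what it buys is that you avoid simple-function approximation and the citation to Bonsall--Duncan altogether, at the price of one lattice-theoretic fact.

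That lattice fact is the only shaky spot in your write-up: a literal product of two-point measures over the (uncountably many) points of $\Omega$ raises joint-measurability problems for $(\omega',t)\mapsto\eps_{\omega'}(t)$, so ``$w=\int g\,\eps\,dP(\eps)$'' needs more care than you give it. It is easily repaired in any of three ways: (i) use a single uniform variable $s\in[0,1]$ and $\eps_s=2\chi_{\{w/g\,\geq\, 2s-1\}}-1$, so that $w=\int_0^1 g\,\eps_s\,ds$ as a Bochner integral and Jensen applies; (ii) note that $[-g,g]$ is weakly compact in the reflexive space $L_p(\mu)$, that its extreme points are exactly the functions $g\eps$, and apply Krein--Milman together with the coincidence of norm and weak closures of convex sets; or (iii) approximate $w/g$ by finitely-valued functions, which reduces the claim to the finite-dimensional statement that the cube $[-1,1]^m$ is the convex hull of $\{-1,1\}^m$ --- precisely the convexity step the paper uses. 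With any of these insertions your argument is complete and yields the stated inequality.
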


\begin{proof}
We consider the complex linear operator $T_\mathbb C \in
\mathcal{L} \bigl(L_p^\C(\mu)\bigr)$ given by
\begin{equation} \label{eq:pppq}
T_{\mathbb C} (x) = T ( \re x) + i \, T ( \im x)\qquad \bigl(x \in L_p^\mathbb C(\mu)\bigr).
\end{equation}
Evidently, $\|T\| \leq \|T_\mathbb C\|$. Now, consider any simple
function $ x = \sum\limits_{j=1}^m a_j \e^{i \theta_j} \chi_{{A_j}}
\in L_p^\mathbb C(\mu)$ where $m \in \mathbb N$,  $a_j \geq 0$,
$\theta_j \in [0, 2 \pi)$, the sets $A_1,\dots, A_m\in\Sigma$ are
pairwise disjoint, $\sum\limits_{j=1}^m a_j^p\mu(A_j)=1$, and
observe that $x^\#\in L_q^\C(\mu)$ is given by the formula
$$
x^\# = \sum\limits_{j=1}^m a_j^{p-1} \e^{-i \theta_j}
\chi_{{A_j}}.
$$
Then, writing
$$
\alpha_{j,k} = \int_{A_j} T_{\mathbb C}(\chi_{A_k})\, d \mu
=\int_{A_j} T(\chi_{A_k})\, d\mu,
$$
we obtain that
\begin{align}\label{eq:thm-long}
\Bigl| \int_{\Omega} x^\# T_{\mathbb C}(x) \, d \mu \Bigr| &=
\Bigl| \sum\limits_{j=1}^m a_j^{p-1} \e^{-i \theta_j}
\sum\limits_{k=1}^m a_k \e^{i \theta_k} \alpha_{j,k} \Bigr| \leq
\sum\limits_{j=1}^m a_j^{p-1} \Bigl| \sum\limits_{k=1}^m a_k \e^{i
\theta_k}
\alpha_{j,k} \Bigr|  \notag \\
&\leq \sum\limits_{j=1}^m a_j^{p-1} \left( \Bigl| \sum\limits_{k=1}^m
a_k \cos(\theta_k)\, \alpha_{j,k} \Bigr| + \Bigl| \sum\limits_{k=1}^m
a_k \sin(\theta_k)\, \alpha_{j,k} \Bigr| \right) \\
&\leq 2 \, \max\limits_{(z_k) \in [-1,1]^m} \sum\limits_{j=1}^m
a_j^{p-1} \Bigl| \sum\limits_{k=1}^m a_k z_k \alpha_{j,k} \Bigr| =
2 \, \max\limits_{(z_k) \in \{-1,1\}^m} \sum\limits_{j=1}^m
a_j^{p-1} \Bigl| \sum\limits_{k=1}^m a_k z_k \alpha_{j,k}
\Bigr|,\notag
\end{align}
where the last equality follows from the convexity of the
function $f: [-1,1]^m \longrightarrow \mathbb R$ defined by
$$
f(z_1, \ldots, z_m) = \sum\limits_{j=1}^m a_j^{p-1} \Bigl|
\sum\limits_{k=1}^m a_k z_k \alpha_{j,k} \Bigr|.
$$
On the other hand, for any finite sequence $(z_k) \in
\{-1,1\}^m$, putting
$$
y_{(z_k)} = \sum\limits_{j=1}^m a_j z_j \chi_{A_j} \in
L_p(\mu),
$$
one has $\|y_{(z_k)}\|=1$ and that
\begin{align*}
\int_{\Omega} \bigl| y_{(z_\ell)}^\# T(y_{(z_\ell)}) \bigr| \, d \mu
& = \int_{\Omega} \Bigl|
\sum\limits_{j=1}^m a_j^{p-1} z_j\, \chi_{A_j} \sum\limits_{k=1}^m
a_k z_k T (\chi_{A_k}) \Bigr| \, d \mu \\ & = \sum\limits_{j=1}^m
\int_{A_j} \Bigl| a_j^{p-1} z_j \sum\limits_{k=1}^m
a_k z_k T (\chi_{A_k}) \Bigr| \, d \mu \\ &
= \sum\limits_{j=1}^m a_j^{p-1} \int_{A_j} \Bigl|
\sum\limits_{k=1}^m a_k z_k T \chi_{A_k} \Bigr| \,
d \mu \\ & \geq \sum\limits_{j=1}^m a_j^{p-1}  \Bigl| \int_{A_j}
\sum\limits_{k=1}^m a_k z_k T \chi_{A_k} \, d
\mu\Bigr| = \sum\limits_{j=1}^m a_j^{p-1}  \left| \sum\limits_{k=1}^m a_k
z_k \alpha_{j,k} \right|.
\end{align*}
This, together with \eqref{eq:thm-long}, implies that
\begin{align*}
2 |v|(T) & \geq 2 \, \max\limits_{z_\ell \in \{-1,1\}} \int_{\Omega}
\bigl| y_{(z_\ell)}^\# T (y_{(z_\ell)}) \bigr| \, d \mu \\ &\geq 2
\, \max\limits_{z_\ell \in \{-1,1\}} \sum\limits_{j=1}^m
a_j^{p-1}  \Bigl| \sum\limits_{k=1}^m a_k z_k \alpha_{j,k}
\Bigr| \geq \Bigl| \int_{\Omega} x^\# T_{\mathbb C}(x) \, d \mu
\Bigr|.
\end{align*}
Since the set of all simple functions is dense in $L_p^\mathbb
C(\mu)$, it follows from \cite[Theorem~9.3]{B-D1} that the
above inequality implies that
\begin{equation*}
2 |v|(T) \geq v(T_\C) \geq n\bigl(L_p^\mathbb C(\mu)\bigr) \|T_{\mathbb C}\|
\geq n\bigl(L_p^\mathbb C(\mu)\bigr) \|T\|.\qedhere
\end{equation*}
\end{proof}

It remains to notice that $n\bigl(L_p^\mathbb C(\mu)\bigr) \geq
1/\e$ (as happens for any complex Banach space, see
\cite[Theorem~4.1]{B-D1}), to get the following consequence
from the above two theorems.

\begin{corollary}
Let $1<p<\infty$ and let $\mu$ be a positive measure. Then, in the
real case, one has
$$
n\bigl(L_p(\mu)\bigr)\geq \frac{M_p}{12\e}
$$
where $\displaystyle M_p=\max_{t\geq 1}
\frac{|t^{p-1}-t|}{1+t^p}$.
\end{corollary}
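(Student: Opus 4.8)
The plan is to obtain the stated bound by simply composing the two theorems already established in this section, so the only new ingredient needed is the universal estimate $n(X)\geq 1/\e$ valid for every complex Banach space $X$. First I would fix an arbitrary operator $T\in\mathcal{L}\bigl(L_p(\mu)\bigr)$ on the real space and, since $v$, $|v|$ and the operator norm are all positively homogeneous in $T$, normalise $\|T\|=1$ without loss of generality; the assertion about $n\bigl(L_p(\mu)\bigr)$ will then follow by taking the infimum over such $T$.

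Next I would apply the two inequalities in sequence. The second theorem gives $|v|(T)\geq \tfrac12\,n\bigl(L_p^{\C}(\mu)\bigr)\,\|T\|$, while the first gives $v(T)\geq \tfrac{M_p}{6}\,|v|(T)$; chaining them yields
\[
v(T)\ \geq\ \frac{M_p}{6}\,|v|(T)\ \geq\ \frac{M_p}{6}\cdot\frac{n\bigl(L_p^{\C}(\mu)\bigr)}{2}\,\|T\|\ =\ \frac{M_p\,n\bigl(L_p^{\C}(\mu)\bigr)}{12}\,\|T\|.
\]
At this stage the only quantity not already under control is $n\bigl(L_p^{\C}(\mu)\bigr)$, so I would invoke the classical fact \cite[Theorem~4.1]{B-D1} that $n(X)\geq 1/\e$ for every complex Banach space, applied to $X=L_p^{\C}(\mu)$. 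Substituting $n\bigl(L_p^{\C}(\mu)\bigr)\geq 1/\e$ then gives $v(T)\geq \tfrac{M_p}{12\e}\,\|T\|$ for every operator $T$ on the real $L_p(\mu)$.

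Finally, recalling that $n\bigl(L_p(\mu)\bigr)=\inf\{v(T):\|T\|=1\}$, the bound $v(T)\geq \tfrac{M_p}{12\e}$ for every unit-norm $T$ is exactly the desired conclusion. The main obstacle lies not in this last deduction, which is a one-line composition, but entirely in the two theorems it rests upon: the first theorem, where the perturbation $y_\lambda=x+\lambda x\chi_{B}$ and the optimisation over $t=1+\lambda$ recover the constant $M_p$, and the second theorem, where passing to the complexification $T_{\C}$ and separating real and imaginary parts costs the factor $2$. Granting those, the passage to $n\bigl(L_p(\mu)\bigr)$ recorded here is routine.
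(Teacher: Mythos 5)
Your proposal is correct and is exactly the paper's argument: the corollary is obtained by chaining the two theorems, $v(T)\geq \frac{M_p}{6}|v|(T)\geq \frac{M_p}{6}\cdot\frac{n\bigl(L_p^\C(\mu)\bigr)}{2}\|T\|$, and then invoking the classical bound $n\bigl(L_p^\C(\mu)\bigr)\geq 1/\e$ from \cite[Theorem~4.1]{B-D1}. No genuinely new ingredient is needed, and your final passage to the infimum defining $n\bigl(L_p(\mu)\bigr)$ is the same routine step the paper leaves implicit.
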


Since, clearly, $M_p>0$ for $p\neq 2$, we get the following
consequence which answers in the positive a question raised by
C.~Finet and D.~Li (see \cite{Eddari2,Eddari3}) also posed in
\cite[Problem~1]{KaMaPa}.

\begin{corollary}
Let $1<p<\infty$, $p\neq 2$ and let $\mu$ be a positive measure.
Then $n\bigl(L_p(\mu)\bigr)>0$ in the real case. In other words, the
numerical radius and the operator norm are equivalent on
$\mathcal{L}\bigl(L_p(\mu)\bigr)$.
\end{corollary}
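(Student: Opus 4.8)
My plan is to obtain a strictly positive lower bound for $n\bigl(L_p(\mu)\bigr)$ by chaining the two theorems already established and then to reduce the whole assertion to an elementary one-variable inequality. Those theorems assert, for every operator $T$ on the real space $L_p(\mu)$, that $v(T)\geq \tfrac{M_p}{6}\,|v|(T)$ and $|v|(T)\geq \tfrac{1}{2}\,n\bigl(L_p^\C(\mu)\bigr)\,\|T\|$; multiplying them gives $v(T)\geq \tfrac{M_p}{12}\,n\bigl(L_p^\C(\mu)\bigr)\,\|T\|$. Since $n\bigl(L_p^\C(\mu)\bigr)\geq 1/\e$ for any complex Banach space, this yields $v(T)\geq \tfrac{M_p}{12\e}\|T\|$ for all $T$, hence $n\bigl(L_p(\mu)\bigr)\geq \tfrac{M_p}{12\e}$, which is the preceding Corollary. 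Everything then comes down to checking that $M_p>0$ whenever $p\neq 2$.

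To see this, I would analyse the function $g(t)=\dfrac{|t^{p-1}-t|}{1+t^p}$ on $[1,\infty)$. It is continuous and nonnegative, with $g(1)=0$ and $g(t)\to 0$ as $t\to\infty$ (both $t^{p-1}/t^p$ and $t/t^p$ tend to $0$), so $g$ attains a finite maximum $M_p$ on $[1,\infty)$. For $p\neq 2$ and any $t>1$ one has $t^{p-1}\neq t$, since $t^{p-1}=t$ with $t>1$ would force $p-1=1$; hence $g(t)>0$ at every such point and therefore $M_p=\max_{t\geq 1}g(t)>0$.

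Combining the two ingredients gives $n\bigl(L_p(\mu)\bigr)\geq \dfrac{M_p}{12\e}>0$ for $p\neq 2$, which is the claim. The equivalent reformulation is immediate from the definitions: $n(X)>0$ means exactly that $k\,\|T\|\leq v(T)$ holds for some $k>0$ and all $T\in\mathcal{L}(X)$, and since always $v(T)\leq\|T\|$, the numerical radius and the operator norm are then equivalent seminorms on $\mathcal{L}\bigl(L_p(\mu)\bigr)$.

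The substantive difficulty does not sit in this corollary but upstream, in the two theorems I am invoking, and above all in the inequality $v(T)\geq \tfrac{M_p}{6}\,|v|(T)$, whose proof requires the perturbation $y_\lambda=x+\lambda\,x\chi_B$ and an optimization over $\lambda$. Here the only genuine point is the positivity of $M_p$, which is elementary yet telling: it fails precisely at $p=2$, where $t^{p-1}\equiv t$ forces $M_2=0$, consistently with the vanishing of the numerical index of real Hilbert spaces of dimension at least two.
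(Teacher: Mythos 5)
Your proposal is correct and follows the paper's own route exactly: chain the two theorems with the bound $n\bigl(L_p^{\mathbb C}(\mu)\bigr)\geq 1/\e$ to get $n\bigl(L_p(\mu)\bigr)\geq M_p/(12\e)$ (the preceding corollary), then observe that $M_p>0$ for $p\neq 2$. The only difference is that you spell out the elementary verification of $M_p>0$, which the paper dismisses as ``clear''; your verification is sound (any $t_0>1$ gives $t_0^{p-1}\neq t_0$ when $p\neq 2$, so $M_p\geq g(t_0)>0$).
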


It is a particular case of \cite[Theorem~2.2]{Eddari3} that
$n\bigl(L_p(\mu)\bigr)= \inf\limits_m\ n(\ell^m_p)$ for every
infinite-dimensional $L_p(\mu)$-space. For finite-dimensional
spaces, $n(\ell_p^m)\leq n(\ell_p^2)$ for every $m\geq 2$ since
$\ell_p^2$ is an $\ell_p$-summand on $\ell_p^m$ and we may use
\cite[Remark~2.a]{M-P}. On the other hand, it is clear that
$n(\ell_p^2)\leq M_p$ (since $M_p$ is the numerical radius of a
norm-one operator on the real $\ell_p^2$, see
\cite[Lemma~2]{MarMer-LP} for instance). It then follows that
$$
n\bigl(L_p(\mu)\bigr)\leq M_p
$$
for every $1<p<\infty$ and every positive measure $\mu$ such
that $\dim\bigl(L_p(\mu)\bigr)\geq 2$. We do not know whether
the above inequality is actually an equality.

\vspace{1cm}

\noindent\textbf{Acknowledgments:\ } The authors would like to
thank Rafael Pay\'{a} for fruitful conversations concerning the
matter of this paper.

\vspace{1cm}


\end{document}